\numberwithin{equation}{section}
\newtheorem{Theorem}{Theorem}[section]
\newtheorem{Corollary}[Theorem]{Corollary}
\newtheorem{Lemma}[Theorem]{Lemma}
\newtheorem{Proposition}[Theorem]{Proposition}
 { \theoremstyle{definition}
\newtheorem{Example}[Theorem]{Example}
\newtheorem{Remark}[Theorem]{Remark} }
\def\CC{\ensuremath{\mathbb{C}}}
\def\QQ{\ensuremath{\mathbb{Q}}}
\def\RR{\ensuremath{\mathbb{R}}}
\def\ZZ{\ensuremath{\mathbb{Z}}}
\newcommand{\integers}{\mathbb{Z}}
\newcommand{\rationals}{\mathbb{Q}}
\DeclareMathOperator{\Ind}{Ind}
\newcommand{\tensor}{\otimes}
\newcommand{\into}{\hookrightarrow}
\newcommand{\iso}{\cong}
\newcommand{\disjointunion}{\amalg}
\DeclareMathOperator{\Pos}{Pos}
\DeclareMathOperator{\im}{im}
\DeclareMathOperator{\spin}{spin}
\DeclareMathOperator{\coker}{coker}
\begin{document}
\allowdisplaybreaks

\newcommand{\arXivNumber}{2006.15965}

\renewcommand{\thefootnote}{}

\renewcommand{\PaperNumber}{129}

\FirstPageHeading

\ShortArticleName{Positive Scalar Curvature due to the Cokernel of the Classifying Map}

\ArticleName{Positive Scalar Curvature due to the Cokernel\\ of the Classifying Map\footnote{This paper is a~contribution to the Special Issue on Scalar and Ricci Curvature in honor of Misha Gromov on his 75th Birthday. The full collection is available at \href{https://www.emis.de/journals/SIGMA/Gromov.html}{https://www.emis.de/journals/SIGMA/Gromov.html}}}

\Author{Thomas SCHICK~$^\dag$ and Vito Felice ZENOBI~$^\ddag$}

\AuthorNameForHeading{T.~Schick and V.F.~Zenobi}

\Address{$^\dag$~Mathematisches Institut, Universit\"at G\"ottingen, Germany}
\EmailD{\href{mailto:thomas.schick@math.uni-goettingen.de}{thomas.schick@math.uni-goettingen.de}}
\URLaddressD{\url{http://www.uni-math.gwdg.de/schick/}}

\Address{$^\ddag$~Dipartimento di Matematica, Sapienza Universit\`{a} di Roma,\\
\hphantom{$^\ddag$}~Piazzale Aldo Moro 5 - 00185 - Roma, Italy}
\EmailD{\href{mailto:vitofelice.zenobi@uniroma1.it}{vitofelice.zenobi@uniroma1.it}}

\ArticleDates{Received July 13, 2020, in final form December 04, 2020; Published online December 09, 2020}

\Abstract{This paper contributes to the classification of positive scalar curvature metrics up to bordism and up to concordance. Let $M$ be a closed spin manifold of dimension $\ge 5$ which admits a metric with positive scalar curvature. We give lower bounds on the rank of the group of psc metrics over $M$ up to bordism in terms of the corank of the canonical map $KO_*(M)\to KO_*(B\pi_1(M))$, provided the rational analytic Novikov conjecture is true for~$\pi_1(M)$.}

\Keywords{positive scalar curvature; bordism; concordance; Stolz exact sequence; analytic surgery exact sequence; secondary index theory; higher index theory; K-theory}

\Classification{53C20; 53C21; 53C27; 55N22; 19K56; 19L64}

\renewcommand{\thefootnote}{\arabic{footnote}}
\setcounter{footnote}{0}

\section{Introduction}

 The study of metrics with positive scalar curvature is nowadays the focus of a
 very active area of research. The starting point typically will be a closed
 spin manifold $M$, and one would like to get suitable information about the
 possible Riemannian metrics on $M$.

 Stephan Stolz introduced a long exact sequence
 for the systematic bordism classification of metrics of positive scalar
 curvature. For this, one has to fix an additional reference space $X$. Then
 this sequence is given by (ending with $n=5$ at the right)
 \begin{equation}\label{ses}
 \xymatrix{\cdots\ar[r]&\mathrm{R}^{\spin}_{n+1}(\Gamma)\ar[r]^\partial&\mathrm{Pos}^{\spin}_{n}(X)\ar[r]& \Omega^{\spin}_n(X)\ar[r]&\mathrm{R}^{\spin}_{n}(\Gamma)\ar[r]^\partial&\cdots.}
 \end{equation}
Here $\Omega^{\spin}_n(X)$ is the usual spin cobordism group, it consists of cobordism classes of cycles
 $f\colon M\to X$, with $M$ a closed $n$-dimensional spin manifold; $\mathrm{Pos}^{\spin}_{n}(X)$ is the group of bordism classes of metrics of
 positive scalar curvature on $n$-dimensional closed spin manifolds with reference
 map to $X$; finally, $\mathrm{R}^{\spin}_{n}(\Gamma):=\mathrm{R}^{\spin}_{n}(X)$ is a relative group
 discussed
 in more detail below, known to depend only on
 $\Gamma:=\pi_1(X)$. The group structure in each of the three cases is given by
 disjoint union.

Because the starting point typically is a fixed manifold $M$, one has to make
 a suitable choice of $X$. The standard choice here is $X=B\Gamma$ with
 $\Gamma=\pi_1(M)$. Note that with $X=B\Gamma$ the Stolz sequence then contains
 in $\mathrm{Pos}^{\spin}_{n}(B\Gamma)$ information for all spin manifolds with fundamental group~$\Gamma$ at
 once. This is the situation discussed in the majority of all the previous work.

In the current article we change the paradigm a bit. We
 argue that, starting with $M$, the choice of $X=M$ is even more canonical, and
 we study $\mathrm{Pos}^{\spin}_{n}(M)$. The
 usual applications to concordance classes of metrics of positive scalar
 curvature on $M$ can still be made, and the theory is richer and more specific.

 A very fruitful way to get information about the Stolz sequence (for
 arbitrary $X$) uses the index theory of the spin Dirac operator. A
 systematic approach was given in \cite{PiazzaSchick_Stolz}, where the authors
 construct a mapping of~\eqref{ses} to the analytic surgery exact sequence of
 Higson and Roe (where $\Gamma=\pi_1(X)$)
 \begin{equation}\label{HRses}
 \begin{split}&
 \xymatrix{\cdots\ar[r]&\mathrm{R}^{\spin}_{n+1}(\Gamma)\ar[r]^\partial\ar[d]^{{\Ind}^{\Gamma}}&\mathrm{Pos}^{\spin}_{n}(X)\ar[r]\ar[d]^{\varrho}& \Omega^{\spin}_n(X)\ar[r]\ar[d]^\beta&\mathrm{R}^{\spin}_{n}(\Gamma)\ar[r]^\partial\ar[d]^{{\Ind}^{\Gamma}}&\cdots\\
 \cdots\ar[r]^{\mu^\Gamma_X\qquad}&K_{n+1}(C^*\Gamma)\ar[r]&\mathcal{S}_n^\Gamma(\widetilde{X})\ar[r]& K_n(X)\ar[r]^{\mu^\Gamma_X}&K_n(C^*\Gamma)\ar[r]&\cdots.}
 \end{split}
 \end{equation}

A successful strategy for detecting non-trivial elements in
 $\mathrm{Pos}^{\spin}_{n}(X)$ goes as follows: if one can construct a cycle
 $\xi$ for $\mathrm{R}^{\spin}_{n+1}(\Gamma)$ such that ${\Ind}^{\Gamma}(\xi)$
 maps to a nonzero element in the cokernel of $\mu_X^\Gamma$, then $\partial(\xi)$ is non-zero in
 $\mathrm{Pos}^{\spin}_{n}(X)$ because its image through $\varrho$ does not
 vanish.

 Indeed, this is the line followed by Weinberger and Yu in
 \cite{WeinbergerYu}, where the authors define the so-called \emph{finite part}
 of the K-theory of the maximal group C*-algebra, which is proven to map
 injectively to the
 cokernel of the assembly map. Along with this they give the concrete
 construction of elements in $\mathrm{R}^{\spin}_{n+1}(\Gamma)$ whose higher
 index belongs to this finite part. Xie, Yu and Zeidler in~\cite{XieYuZeidler} have
 systematized those constructions and corrected some mistakes, giving a more exhaustive description of the images of the vertical arrows in~\eqref{HRses}.
 These are complemented by a long line of results which instead make use of
 higher numerical invariants, such as~\cite{BotvinnikGilkey_eta,LeichtnamPiazza}, where higher $\eta$-invariants are used, or~\cite{PiazzaSchick}, where Cheeger--Gromov $L^2$-$\varrho$-invariants
 play an important role.

 Another example of how K-theory methods could improve those which use
 numerical invariants can be appreciated by comparing
 \cite{PiazzaSchickZenobi_psc} with \cite{KazarasRubermanSaveliev}, where
 $\eta$-invariants on end-periodic ends are used in order to study positive
 scalar curvature metrics on even dimensional manifolds. We don't want
 to repeat the results of this work in detail. The general pattern is: the
 invariants mentioned are constructed and shown to be invariants of classes
 in $\mathrm{Pos}_*^{\spin}(X)$ or of the concordance classes, and then (many)
 elements are constructed which are distinguished by these invariants,
 giving rise to interesting lower bounds on the rank of
 $\mathrm{Pos}^{\spin}_*(X)$.

 All the work described so far uses fundamentally that
 the group
 $\Gamma$ contains non-trivial torsion. In particular, as it is explicitly
 explained in
 \cite{XieYuZeidler}, the ultimate source of those constructions is the
 difference between $\underline{E}\Gamma$, the classifying space for proper
 actions, and $E\Gamma$, the classifying space for proper and free
 actions.

 Now, if $\Gamma$ is torsion-free, then $\underline{E}\Gamma$ and $E\Gamma$
 coincide. Therefore one has to find an alternative source for non-trivial
 elements in $\mathrm{R}^{\spin}_{n+1}(\Gamma)$ whose higher index maps to
 non-trivial elements of the cokernel of the assembly map.

 The main method of this paper is to use the homological difference between
 $M$ and $B\Gamma$ for this purpose. More generally, given $X$ (which could
 be $M$) with a classifying map $u\colon X\to B\Gamma$ such that the homological
 difference between them is rich, we can construct non-trivial elements in
 $\mathrm{R}^{\spin}_{n+1}(\Gamma)$. Although our main motivation was to
 obtain results for torsion-free fundamental groups, our constructions
 work for arbitrary $\Gamma$. In particular, we prove the following
 result.

 \begin{Theorem}\label{lower-bound} Let $X$ be a finite connected CW-complex
 and $n\ge 5$. Let $u\colon X\to B\pi_1(X)$ be the classifying map of its universal covering.
 Let us assume that the $($rational$)$ strong Novikov conjecture holds for
 $\Gamma:=\pi_1(X)$, i.e., the assembly map $K_*(B\Gamma)\otimes\QQ\to
 K_*(C^*\Gamma)\otimes\QQ$ is injective. Set
 \[k:= \dim\bigg(\mathrm{coker}\bigg(\bigoplus_{j\geq0}H_{n+1-4j}(X;\QQ)\xrightarrow{u_*}\bigoplus_{j\geq0}H_{n+1-4j}(B\Gamma;\QQ)\bigg)\bigg), \]
 and
 \[k':= \dim\bigg(\mathrm{ker}\bigg(\bigoplus_{j\geq0}H_{n-4j}(X;\QQ)\xrightarrow{u_*}\bigoplus_{j\geq0}H_{n-4j}(B\Gamma;\QQ)\bigg)\bigg),\]
 then
 \[\operatorname{rk} \mathrm{Pos}^{\spin}_{n}(X)\geq k+k'.\]
 \end{Theorem}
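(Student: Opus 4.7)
The plan is to analyse the map $\varrho\colon \mathrm{Pos}^{\spin}_n(X)\to \mathcal{S}_n^\Gamma(\widetilde X)$ from \eqref{HRses} after tensoring with $\QQ$ and show that its image has rank at least $k+k'$. Rational exactness of the bottom row of \eqref{HRses} produces the short exact sequence
\[
0 \to \mathrm{coker}\bigl(\mu^\Gamma_X\colon K_{n+1}(X)\to K_{n+1}(C^*\Gamma)\bigr)\otimes\QQ \to \mathcal{S}_n^\Gamma(\widetilde X)\otimes\QQ \to \ker\bigl(\mu^\Gamma_X\colon K_n(X)\to K_n(C^*\Gamma)\bigr)\otimes\QQ \to 0.
\]
The rational injectivity of $\mu^\Gamma$ (the Novikov hypothesis) gives $\ker(\mu^\Gamma_X)\otimes\QQ = \ker(u_*\colon K_n(X)\to K_n(B\Gamma))\otimes\QQ$ and embeds $\mathrm{coker}(u_*\colon K_{n+1}(X)\to K_{n+1}(B\Gamma))\otimes\QQ$ into $\mathrm{coker}(\mu^\Gamma_X)\otimes\QQ$. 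Since the rational Chern character identifies $K_*(-)\otimes\QQ$ with $\bigoplus_j H_{*-2j}(-;\QQ)$ and the image of the Dirac map $\beta\colon \Omega^{\spin}_*(-)\otimes\QQ \to K_*(-)\otimes\QQ$ is exactly the sub-summand $\bigoplus_j H_{*-4j}(-;\QQ)$ (via $f_*(\hat A(M)\cap [M])$), the quantities $k$ and $k'$ computed on the $H_{*-4j}$-pieces are the dimensions of the relevant subspaces in the cokernel and kernel pieces. The strategy is to hit these subspaces: produce rank $k$ in the image of $\varrho\otimes\QQ$ landing in the cokernel summand, together with a further rank $k'$ projecting onto the kernel summand.

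For the cokernel ($k$) part, given a basis element $h$ of the cokernel of $u_*$ on $\bigoplus_j H_{n+1-4j}$, lift $h$ to $\hat h\in K_{n+1}(B\Gamma)\otimes\QQ$ by inverting the Chern character, and represent $\hat h$ as $\beta[M,g]$ for some spin cycle $[M,g]\in \Omega^{\spin}_{n+1}(B\Gamma)\otimes\QQ$ using the rational surjectivity of $\beta$ onto the $\bigoplus_j H_{n+1-4j}$-summand. Via the Stolz sequence for $B\Gamma$ this produces $\xi\in\mathrm{R}^{\spin}_{n+1}(\Gamma)$, and applying the boundary $\partial_X$ of the Stolz sequence for $X$ yields $\partial_X\xi\in\mathrm{Pos}^{\spin}_n(X)$. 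By commutativity of \eqref{HRses}, $\varrho(\partial_X\xi)=\partial^{\mathrm{HR}}_X(\mu^\Gamma\hat h)$; rational injectivity of $\mu^\Gamma$ combined with $h\notin\mathrm{im}(u_*)$ forces $\mu^\Gamma\hat h\notin \mathrm{im}(\mu^\Gamma_X)\otimes\QQ$, so $\varrho(\partial_X\xi)$ is nonzero in the cokernel summand. Letting $h$ run over a basis provides $k$ linearly independent classes whose $\varrho$-images lie in the cokernel piece.

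The kernel ($k'$) part is the main obstacle. Given $\alpha$ in the kernel of $u_*$ on $\bigoplus_j H_{n-4j}$, lift to $\hat\alpha\in K_n(X)\otimes\QQ$ and write $\hat\alpha=\beta[N,f]$ for some $[N,f]\in \Omega^{\spin}_n(X)\otimes\QQ$ via $\beta$-surjectivity. To promote $[N,f]$ to an element of $\mathrm{Pos}^{\spin}_n(X)\otimes\QQ$, by exactness of the Stolz sequence for $X$ it must vanish in $\mathrm{R}^{\spin}_n(\Gamma)\otimes\QQ$; since this image depends only on $[N,u\circ f]\in\Omega^{\spin}_n(B\Gamma)\otimes\QQ$, it suffices to show that $[N,u\circ f]$ lies in the rational image of $\mathrm{Pos}^{\spin}_n(B\Gamma)\to\Omega^{\spin}_n(B\Gamma)$. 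The analytic obstruction $\beta[N,u\circ f]=u_*\hat\alpha$ vanishes by construction, and granting the strong Novikov conjecture in its $KO$-theoretic rational form (equivalent rationally to the stated complex hypothesis), Stolz's psc-bordism theorem applied rationally delivers exactly the required bordism; this is the step where the Novikov hypothesis and the dimension restriction $n\ge 5$ are truly used. The resulting lift $\eta\in\mathrm{Pos}^{\spin}_n(X)\otimes\QQ$ then satisfies that the projection of $\varrho(\eta)$ to $K_n(X)$ is $\hat\alpha$; varying $\alpha$ over a basis of $\ker(u_*)$ gives $k'$ further classes, independent from the previous $k$ by the short exact sequence above. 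Summing the two contributions yields $\mathrm{rk}\,\mathrm{Pos}^{\spin}_n(X)\ge k+k'$.
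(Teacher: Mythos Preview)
Your overall architecture is sound and the cokernel ($k$) part is essentially the paper's argument: produce classes in $\Omega^{\spin}_{n+1}(B\Gamma)$ whose images in $\coker(u^{KO}_*)\otimes\QQ$ are independent, push them through $j$ and $\partial_X$, and detect them via $\varrho$ using the rational injectivity of $\mu^\Gamma_{B\Gamma}$.

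The $k'$ part, however, diverges from the paper and contains a confusion that borders on a gap. You lift $\alpha$ to an \emph{arbitrary} cycle $[N,f]\in\Omega^{\spin}_n(X)\otimes\QQ$ with $\beta[N,f]=\hat\alpha$, and then must argue that $[N,u\circ f]$ lies rationally in the image of $\Pos^{\spin}_n(B\Gamma)$. The only information you have is $\beta[N,u\circ f]=u_*\hat\alpha=0$ in $ko_n(B\Gamma)\otimes\QQ$. From this you invoke ``Stolz's psc-bordism theorem'' together with the Novikov hypothesis. But the relevant theorem of Stolz---that for $n\ge 5$ and any space $Y$ one has $\ker\big(\beta\colon\Omega^{\spin}_n(Y)\to ko_n(Y)\big)\subset\im\big(\Pos^{\spin}_n(Y)\to\Omega^{\spin}_n(Y)\big)$---does \emph{not} use the Novikov conjecture at all; it is a consequence of Stolz's splitting of $MSpin$. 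Your sentence ``this is the step where the Novikov hypothesis \ldots\ is truly used'' is therefore wrong: Novikov enters only in the $k$ part. If instead you meant a statement of the form ``$\mu^\Gamma_{B\Gamma}\circ\beta=0$ implies psc-bordant'', that is the (unproven) Gromov--Lawson--Rosenberg conjecture, and your argument would have a genuine gap. So the $k'$ step is salvageable, but only by invoking the correct (and rather deep) Stolz theorem and dropping the spurious appeal to Novikov.

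The paper avoids this heavy input entirely. Rather than lifting $\alpha$ arbitrarily and then arguing about psc, it uses the \emph{relative} version of the $\beta$-surjectivity lemma (Lemma~\ref{pontrjagin-surjective}) for the map $u\colon X\to B\Gamma$ to produce classes directly in $\ker\big(u^\Omega_*\colon\Omega^{\spin}_n(X)\to\Omega^{\spin}_n(B\Gamma)\big)\otimes\QQ$ that surject onto the homological kernel. Since $j_X$ factors through $u^\Omega_*$ (because $\mathrm{R}^{\spin}_n(X)\cong\mathrm{R}^{\spin}_n(\Gamma)$), any such class has $j_X=0$ automatically and hence lifts to $\Pos^{\spin}_n(X)\otimes\QQ$ by exactness---no Stolz structure theorem needed. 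The paper also separates the two contributions directly in the Stolz sequence (the $k$ classes arise from $\partial_X$ and die in $\Omega^{\spin}_n(X)$, the $k'$ classes inject there), rather than via your short exact sequence for $\mathcal{S}^\Gamma_n(\widetilde X)$; both detection arguments are valid, but the paper's is more elementary.
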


 The reason why in the statement of Theorem~\ref{lower-bound} we
 distinguish the elements coming from the kernel and those from the cokernel
 of $u_*$ is that, if $X$ is an $n$-dimensional closed spin manifold, those coming
 from the cokernel can be realized by cycles in $ \mathrm{Pos}^{\spin}_{n}(X)$
 represented by the identity map. This fact is key for the main result of
 Section~\ref{4}.

By standard surgery techniques we can refine the previous result if we look
 for metrics on a fixed manifold $M$. To formulate this, denote by $P^+(M)$ the set of concordance classes of metrics with positive scalar curvature on an $n$-dimensional closed spin manifold $M$.
 In the proof of \cite[Theorem~5.4]{Stolz}, in order to construct a free and
 transitive action of $\mathrm{R}^{\spin}_{n+1}(\Gamma)$ on $P^+(M)$, Stolz
 defines a ``difference'' map
 \begin{equation}\label{i-map}i\colon \ P^+(M)\times P^+(M)\to \mathrm{R}^{\spin}_{n+1}(\Gamma)\end{equation} such that
 \begin{itemize}\itemsep=0pt
 \item $i(g,g)=0$ and $i(g,g')+i(g',g'')= i(g,g'')$ for all $g,g',g''\in P^+(M)$;
 \item the map $i_g\colon P^+(M)\to \mathrm{R}^{\spin}_{n+1}(\Gamma)$, which sends $g'$ to $i(g,g')$ is bijective for all $g\in P^+(M)$.
 \end{itemize}
 This induces on $P^+(M)$ the structure of an
 $\mathrm{R}^{\spin}_{n+1}(\Gamma)$-torsor, or the structure of an affine space
 modelled on $\mathrm{R}^{\spin}_{n+1}(\Gamma)$.
 After picking any point $g_0$ of
 $P^+(M)$
 as the identity,
 $P^+(M)$
 acquires a~group structure isomorphic to
 $\mathrm{R}^{\spin}_{n+1}(\Gamma)$. This group structure is
 non-canonical as it depends on~$g_0$ and therefore seems only useful if there is a preferred~$g_0$
 (e.g., one which bounds a metric of positive scalar curvature, as the standard
 metric on~$S^n$). This kind of structure is studied (and improved to an
 H-space structure on the space of metrics of positive scalar
 curvature) in~\cite{Frenck}. We use the affine structure of~$P^+(M)$
 in the last part of the following Theorem~\ref{rank>k}.

 \begin{Theorem}\label{rank>k}
 Let $X$, $n$, $k$ and $\Gamma$ be as in Theorem~{\rm \ref{lower-bound}}.
 Assume that there exists a cycle in~$\Pos^{\spin}_n(X)$, given by $(f\colon M\to X, g)$ such that $f$ is $2$-connected $($i.e., inducing an isomorphism on $\pi_0$ and $\pi_1$ and a surjection on~$\pi_2)$.

 Then there are metrics with positive scalar curvature $g, g_1,\dots,g_k$ on~$M$, together with the
 fixed map $f\colon M\to X$, which
 \begin{enumerate}\itemsep=0pt
 \item[$(1)$] span an affine lattice of rank $k$ in
 the abelian group $\Pos^{\spin}_n(X)$ and hence an affine space of dimension $k$ in
 $\Pos^{\spin}_n(X)\otimes\rationals$;
 \item[$(2)$] in particular, they span an affine lattice of rank $k$
 in $\Pos^{\spin}_n(M)$ $($with reference map the identity$)$;
 \item[$(3)$] in particular, they span an affine lattice of rank~$k$ of concordance classes of positive
 scalar curvature metrics in~$P^+(M)$. The lattice is
 with respect to the underlying
 structure of an affine space modelled on the abelian group~$R^{\spin}_{n+1}(\Gamma)$.
 \end{enumerate}
 \end{Theorem}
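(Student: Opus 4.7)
The plan is to take the $k$ rationally independent classes $\alpha_1,\dots,\alpha_k$ in $\Pos^{\spin}_n(X)$ produced by the cokernel part of Theorem~\ref{lower-bound}, and realize each of them as a difference of psc metrics on the fixed manifold~$M$ carrying the fixed reference map~$f$. The starting observation is that, by the construction used in the proof of Theorem~\ref{lower-bound}, each $\alpha_i$ lies in the image of the boundary $\partial\colon R^{\spin}_{n+1}(\Gamma)\to\Pos^{\spin}_n(X)$, and therefore has trivial image in $\Omega^{\spin}_n(X)$; equivalently, for every $i$ the class $[M,f,g]+\alpha_i$ has the same underlying spin bordism class as $[M,f,g]$, namely $[M,f]\in\Omega^{\spin}_n(X)$.

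The main step is then a realization claim: because $f$ is 2-connected and $n\geq 5$, any $\beta\in\Pos^{\spin}_n(X)$ mapping to $[M,f]$ in $\Omega^{\spin}_n(X)$ can be represented by a cycle of the form $(M,f,g')$. I would establish this by picking any representative $(N,f_N,h)$ of $\beta$, choosing a spin bordism over~$X$ between $(N,f_N)$ and $(M,f)$, using the 2-connectedness of~$f$ together with $n+1\geq 6$ to perform Kreck--Wall type interior surgery making the bordism itself 2-connected, and then invoking the Gromov--Lawson--Schoen--Yau surgery principle (all interior surgeries being of codimension $\geq 3$) to extend~$h$ across the trace to a psc metric on~$M$. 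Applied to $\beta=[M,f,g]+\alpha_i$, this produces psc metrics $g_1,\dots,g_k$ on~$M$ with $[M,f,g_i]-[M,f,g]=\alpha_i$ in $\Pos^{\spin}_n(X)$; hence $g,g_1,\dots,g_k$ span an affine lattice of rank~$k$, proving~(1).

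For~(2), the pushforward $f_*\colon\Pos^{\spin}_n(M)\to\Pos^{\spin}_n(X)$, $[M,\mathrm{id},h]\mapsto[M,f,h]$, sends $[M,\mathrm{id},g_i]-[M,\mathrm{id},g]\mapsto\alpha_i$, so rational linear independence of the images forces the same for the differences in $\Pos^{\spin}_n(M)$. For~(3), I would use the compatibility of the Stolz difference map~\eqref{i-map} with the boundary $\partial\colon R^{\spin}_{n+1}(\Gamma)\to\Pos^{\spin}_n(M)$ of the Stolz sequence~\eqref{ses} for~$M$, which by construction satisfies $\partial(i(g,g_i))=[M,\mathrm{id},g_i]-[M,\mathrm{id},g]$. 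Since the right-hand sides are rationally linearly independent by~(2), so are the classes $i(g,g_i)\in R^{\spin}_{n+1}(\Gamma)$, whence $g_1,\dots,g_k$ span an affine lattice of rank~$k$ inside the $R^{\spin}_{n+1}(\Gamma)$-torsor $P^+(M)$.

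The main technical obstacle is the realization claim above: performing the high-connectivity surgeries on the bordism while compatibly carrying along the reference map to~$X$, and arranging them to be of codimension at least~$3$ so that the psc metric actually extends. This is classical Stolz-type psc surgery machinery, and it is the only essentially new ingredient beyond Theorem~\ref{lower-bound} itself.
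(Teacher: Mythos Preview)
Your proposal is correct and matches the paper's approach: your ``realization claim'' is exactly Proposition~\ref{prop:change_bord} (interior surgery on a bordism over $X$ to make the inclusion of the $2$-connected end $M$ a $2$-equivalence, hence the bordism Gromov--Lawson admissible), and the paper applies it to the concrete bordism $(M\times[0,1])\amalg Y_i$ from $M\amalg M_i$ to $M$, where $Y_i$ is a chosen null-bordism over $X$ of a representative $(M_i,f_i,h_i)$ of $\alpha_i$. Parts (2) and (3) the paper leaves implicit as immediate consequences of (1); your deductions via $f_*$ and via $\partial\circ i$ are the intended ones.
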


 Perhaps the first result which uses index methods to classify metrics of
 positive scalar curvature is obtained by Carr \cite{Carr}, where infinitely many concordance
 classes of metrics with positive scalar curvature are constructed even on
 simply
 connected manifolds $M$ like the sphere (of the right dimension). This is
 different in spirit to
 our result: we prove in Remark \ref{carr} that the classes of Carr are all equal in $
 \Pos^{\spin}_n(M)$, i.e., although they are not concordant, they are all
 bordant.

 Recently, Ebert and Randal-Williams in \cite{EbertRW} developed a
 very sophisticated bordism category approach to study $\mathcal{R}^+(M)$,
 the space of the metrics with positive
 scalar curvature on $M$. Theorem~C of~\cite{EbertRW} implies that, if $M$ has even dimension $2n$, the fundamental
 group $\Gamma$ verifies rationally the Baum--Connes conjecture and its
 homological dimension is less or equal to $2n+1$, then the so-called index
 difference map
 is a rational surjection of $\pi_0(\mathcal{R}^+(M))$ onto~$KO_{2n+1}(C^*\Gamma)$.

 In our results, we only assume the
 rational injectivity instead of bijectivity of the Baum--Connes assembly map
 for $\Gamma$ and, as remarked in contrast with Carr, see \cite{Carr}, we obtain metrics
 which are not only non-isotopic, but also non-bordant. On the other hand, in
 \cite{EbertRW} the authors are mainly interested in higher homotopy
 groups.

 Finally, we provide a detailed and pedestrian proof how to pass from a bordism $W\xrightarrow{F}X$ between
 $M_0\xrightarrow{f_0}X$ and $M_1\xrightarrow{f_1}X$ to a bordism
 $W'\xrightarrow{F'}X$ with same ends which we call \emph{Gromov--Lawson
 admissible}, meaning that it is built from $M_0$ by attaching
 handles of codimension $\ge 3$, provided that
 $f_1$ is 2-connected. This is certainly a well known and heavily used
 result, but does not seem treated well in a pedestrian way with all details,
 which we try to provide here.

 The paper is organized as follows:
 \begin{itemize}\itemsep=0pt
\item In Section \ref{3} we prove Theorem \ref{lower-bound}, which gives a lower bound for the rank of $\mathrm{Pos}^{\spin}_{n}(X)$ in term of the difference between $X$ and $B\Gamma$.
 \item In Section \ref{4} we prove Theorem \ref{rank>k}, which refines
 Theorem \ref{lower-bound} to a result about concordance classes. In
 particular we give details how bordisms can
 be made Gromov--Lawson admissible in the sense mentioned above.
 \end{itemize}

\section{Mapping psc to analysis to detect bordism classes}\label{3}
In \cite[Section 5]{PiazzaSchick_Stolz} Piazza and Schick construct a map from
the Stolz exact sequence to the Higson--Roe exact sequence (see also
\cite{XieYu,ZenobiAdiabatic} for different approaches). Instead of working
with complex $C^*$-algebras as in \cite{PiazzaSchick_Stolz}, one can without
extra effort adapt this construction to the setting of real $C^*$-algebras (compare
\cite{Zeidler}). All of the constructions are natural. As a result,
for a finite
connected CW-complex $X$ with $\Gamma=\pi_1(X)$ and classifying map $u\colon X\to
B\Gamma$ for its universal covering we obtain
the following commuting diagram of Stolz exact sequences
\begin{equation}\label{diagramStolz}\begin{split}&
 \xymatrix{
 \ar[r]
 &\mathrm{Pos}^{\spin}_{n+1}(X)\ar[r]\ar[d]^{u_*}& \Omega^{\spin}_{n+1}(X)\ar[r]^{j_X}\ar[d]^{{u_*^{\Omega}}}&\mathrm{R}^{\spin}_{n+1}(X)\ar[r]^{\partial_X}\ar[d]^\cong&\mathrm{Pos}^{\spin}_{n}(X)\ar[r]\ar[d]^{u_*}&\\
 \ar[r]
 &\mathrm{Pos}^{\spin}_{n+1}(B\Gamma)\ar[r]&
 \Omega^{\spin}_{n+1}(B\Gamma)\ar[r]^j&\mathrm{R}^{\spin}_{n+1}(\Gamma)\ar[r]^{\partial}& \mathrm{Pos}^{\spin}_{n}(B\Gamma)\ar[r]&,
 }\end{split}
\end{equation}
which is mapped to the corresponding diagram of Higson--Roe sequences
\begin{equation}\label{diagramHR}\begin{split}&
 \xymatrix{
 \ar[r]
 &\mathcal{S}O^\Gamma_{n+1}\big(\widetilde{X}\big)\ar[r]\ar[d]^{u_*}& KO_{n+1}(X)\ar[r]^{\mu^\Gamma_X}\ar[d]^{u_*^{KO}}&KO_{n+1}(C^*\Gamma)\ar[r]^(.6){\iota_X}\ar[d]^= &\mathcal{S}O^\Gamma_{n}\big(\widetilde{X}\big)\ar[r]\ar[d]^{u_*}&\\
 \ar[r]
 &\mathcal{S}O^\Gamma_{n+1}\ar[r]&
 KO_{n+1}(B\Gamma)\ar[r]^{\mu^\Gamma_{B\Gamma}}&KO_{n+1}(C^*\Gamma)\ar[r]^(.6){\iota}& \mathcal{S}O^\Gamma_{n}\ar[r]&.
 }\end{split}
\end{equation}
The relevant maps are the transformation of homology theories
$\beta\colon \Omega^{\spin}\to KO$, the APS-index map ${\Ind}_\Gamma\colon
R^{\spin}\to KO(C^*\Gamma)$ and the secondary index map $\rho\colon
\mathrm{Pos}^{\spin}\to \mathcal{S}O^\Gamma$.
Moreover, we set
$\mathcal{S}O^\Gamma_n\big(\widetilde{X}\big):=KO_n\big(D_{\RR}^*\big(\widetilde{X}\big)^\Gamma\big)$,
the K-theory of the Roe's $D^*$-algebra. Finally, the universal analytic structure
group $\mathcal{S}O^\Gamma_n$ is the limit of $\mathcal{S}O^\Gamma_n(Z)$ over
all $\Gamma$-compact subspaces $Z$ of $E\Gamma$.

Recall that the Pontrjagin character
$
{\rm Ph}\colon KO_*(X)\to \bigoplus_{j\in \ZZ}H_{*+4j}(X;\QQ)
$
is defined as the composition of the complexification map in K-homology
$KO_*(X)\xrightarrow{\otimes\CC}K_*(X)$ and the Chern character ${\rm Ch}\colon
K_*(X)\to \bigoplus_{k\in \ZZ}H_{*+2k}(X;\QQ)$. It so happens that ${\rm Ph}$
takes values in the subgroup $\bigoplus_{j\in\ZZ} H_{*+4j}(X;\QQ)$
and is a rational isomorphism onto that subgroup. Rationally, connective
real K-homology is a subspace of periodic real K-homology,
$ko_*(X)\tensor\rationals\subset KO_*(X)\tensor\rationals$, under the
Pontryagin character isomorphism
$ko_*(X)\tensor\rationals\xrightarrow[\iso]{{\rm Ph}}
\bigoplus_{j\ge 0} H_{*-4j}(X;\rationals)$.

\begin{Lemma}\label{pontrjagin-surjective}
 Let $X$ be a space and $n\geq0$. Then
 the composition
 \begin{equation*}
 \Omega^{\spin}_{n}(X)\otimes\QQ \xrightarrow{\beta}
 ko_{n}(X)\otimes\QQ \xrightarrow[\cong]{{\rm Ph}} \bigoplus_{j\geq0}H_{n-4j}(X;\QQ),\end{equation*}
 which assigns to a cobordism class $\big[M\xrightarrow{f}X\big]$ the class $f_*\big(\hat{A}(M)\cap [M]\big)$,
 is a natural surjection. The same holds for the relative groups
 of a map $u\colon X\to Y$.
\end{Lemma}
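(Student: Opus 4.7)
The plan is first to unpack what the composition computes, then to establish surjectivity by an explicit product construction, and finally to transfer to the relative case by naturality. For the explicit formula: by definition $\beta$ sends a bordism cycle $\bigl[M\xrightarrow{f}X\bigr]$ to $f_{*}[M]^{ko}$, where $[M]^{ko}\in ko_{n}(M)$ is the $ko$-fundamental class of the closed spin manifold $M$. The identity ${\rm Ph}([M]^{ko})=\hat{A}(M)\cap[M]$ in $\bigoplus_{j\ge 0}H_{n-4j}(M;\QQ)$ is the homological form of the Atiyah--Singer index theorem, so composing with $f_{*}$ recovers the formula in the statement.

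For surjectivity in the absolute case I would proceed by induction on $j\ge 0$, filling the summands $H_{n-4j}(X;\QQ)$ one at a time from $j=0$ upward. Fix $\alpha\in H_{n-4j}(X;\QQ)$. Thom's theorem provides $(M_{0},f_{0})$ in $\Omega^{SO}_{n-4j}(X)\otimes\QQ$ with $f_{0*}[M_{0}]=\alpha$, and since $\Omega^{\spin}_{*}\to\Omega^{SO}_{*}$ is a rational isomorphism, one may take $M_{0}$ to be spin. Choose also, for each $j\ge 0$, a closed spin manifold $N_{j}$ of dimension $4j$ with $\hat{A}(N_{j})[N_{j}]=1$ (take $N_{0}=\mathrm{pt}$; for $j\ge 1$, rescaled powers of a $K3$ surface work over $\QQ$), and set $\xi_{j,\alpha}:=\bigl[M_{0}\times N_{j}\xrightarrow{f_{0}\circ\pi_{1}}X\bigr]$. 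Using $\hat{A}(M_{0}\times N_{j})=\hat{A}(M_{0})\times\hat{A}(N_{j})$ and the projection formula along $\pi_{1}$, the only Künneth summands that survive the pushforward are those in which the $N_{j}$-factor lies in $H_{0}(N_{j})$. A direct check then shows that the image of $\xi_{j,\alpha}$ inside $\bigoplus_{k\ge 0}H_{n-4k}(X;\QQ)$ vanishes in degrees $k<j$ and equals $\hat{A}(N_{j})[N_{j}]\cdot f_{0*}[M_{0}]=\alpha$ in degree $k=j$. Hence any prescribed tuple $(\alpha_{0},\alpha_{1},\dots)$ is realised by iteratively adding $\xi_{k,\beta_{k}}$'s to annihilate the residual lower-degree contributions introduced at each step.

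For the relative statement for a map $u\colon X\to Y$, I would invoke Anderson--Brown--Peterson: rationally, $MSpin\otimes\QQ$ splits as a wedge of shifted rational Eilenberg--MacLane spectra having $ko\otimes\QQ$ as a summand, so the natural transformation $\Omega^{\spin}_{*}(-)\otimes\QQ\to ko_{*}(-)\otimes\QQ$ is a split surjection of generalised homology theories. Composing with the rational Pontrjagin character isomorphism ${\rm Ph}\colon ko_{*}(-)\otimes\QQ\xrightarrow{\cong}\bigoplus_{j\ge 0}H_{*-4j}(-;\QQ)$ and applying this to the relative homology theories attached to $u$ gives the desired surjectivity in full generality. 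The main technical point of the plan is the Künneth bookkeeping in the middle paragraph: one has to track carefully how $\hat{A}(M_{0}\times N_{j})\cap[M_{0}\times N_{j}]$ decomposes and which of those pieces remain after $\pi_{1*}$, since that is precisely what makes the single-slot inductive correction possible. Once that calculation is in place, the rest of the argument is purely formal.
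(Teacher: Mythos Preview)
Your argument is correct and, for the absolute case, follows essentially the same route as the paper: both produce an $n$-dimensional cycle hitting a prescribed class in $H_{n-4j}(X;\QQ)$ by taking a low-dimensional spin representative and multiplying with a power of the K3 (Kummer) surface, using multiplicativity of $\hat A$; you spell out the triangular correction that the paper leaves implicit. For the relative statement the approaches diverge: the paper simply notes that the relative groups of $u$ are the reduced groups of the mapping cone of $u$, so the absolute case already yields the relative one, while you appeal to a rational splitting of $MSpin\to ko$ (you cite Anderson--Brown--Peterson, but only the elementary rational fact that $\pi_*(MSpin)\otimes\QQ\to\pi_*(ko)\otimes\QQ$ is onto is needed). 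The paper's mapping-cone argument is lighter; your spectrum-level argument has the virtue of establishing absolute and relative surjectivity in one stroke, which in hindsight makes your explicit product construction pleasant but logically unnecessary.
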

\begin{proof}
 If $x\in H_{n-4j}(X;\QQ)$ then by \cite[Proposition~3.1]{XieYuZeidler} there
 exists a spin manifold $M$ of dimension $n-4j$ and a map $f\colon M\to X$
 such that a non-zero multiple of $x$ is the Pontrjagin character of
 $\beta([f\colon M\to X]) =f_*[\slashed{D}_M]\in
 KO_{n-4j}(X)\otimes\QQ$. Finally, recall the Kummer surface~$V$, a spin manifold whose index generates $KO_4(*)\otimes \QQ$.
 Observe that the cartesian product of $f\colon M\to
 X$ with $V^j\to *$ is $n$-dimensional with a map to~$X$ such
 that the push-forward of its Pontrjagin character is still a non-zero
 multiple~$x$, thanks to
 the multiplicativity of the Pontrjagin character with respect to Cartesian products.

 The relative generalized homology groups of~$u$ are the (reduced) generalized
 homology groups of the mapping cone of~$u$. Therefore, the
 absolute version immediately implies the relative version.
\end{proof}

We are now able to prove the first main result of this paper.
\begin{proof}[Proof of Theorem \ref{lower-bound}]
 It is well known that the natural map $KO_*(B\Gamma)\otimes \QQ\to
 KO_*^{\Gamma}(\underline{E}\Gamma)\otimes\QQ$ is injective, compare,
 e.g., \cite[Section~7]{BCH}. Secondly,
 the rational strong Novikov conjecture for real and complex K-theory are
 equivalent, compare, e.g.,~\cite{Schick_real}. Therefore, if the strong Novikov
conjecture holds for $\Gamma$, it follows that $\mu_{B\Gamma}^\Gamma\colon
 KO_*(B\Gamma)\otimes \QQ\to KO_*(C^*\Gamma)\otimes\QQ $ is injective.

 As $\Omega^{\spin}_{n+1}(X)$ is mapped to $KO_{n+1}(X)$ under the map
 $\beta$
 from the Stolz sequences \eqref{diagramStolz} to the Higson--Roe sequences~\eqref{diagramHR} of \cite[Section~5]{PiazzaSchick_Stolz}, after factoring out their images in the appropriate places, the following maps are well-defined: $ \beta_*\colon \coker_{n+1}\big({u_*^\Omega}\big)\to \coker_{n+1}\big({u_*^{KO}}\big)$ and $({\Ind}_\Gamma)_*\colon \coker_{n+1}(j_X)\to \coker_{n+1}\big(\mu_X^\Gamma\big)$.
 Hence we obtain the following commuting diagram (using along the way the
 inverses of the third vertical arrow in~\eqref{diagramStolz} or~\eqref{diagramHR}, which are
 isomorphisms)
 \begin{equation}\label{eq:coker_diag}\begin{split}&
 \xymatrix{
 \Omega^{\spin}_{n+1}(B\Gamma) \ar@{->>}[r] \ar[d]^{\beta} &
 \coker_{n+1}\big({u_*^\Omega}\big) \ar[r]^{\quad j_*} \ar[d]^{\beta_*} &
 \coker_{n+1}(j_X) \ar@{^{(}->}[r]^{\quad(\partial_X)_*} \ar[d]^{({\Ind}_\Gamma)_*}& \mathrm{Pos}_n^{\spin}(X) \ar[d]^{\varrho}\\
 KO_{n+1}(B\Gamma) \ar@{->>}[r] & \coker_{n+1}\big({u_*^{KO}}\big) \ar[r]^{\quad(\mu^\Gamma_{B\Gamma})_*} &
 \coker_{n+1}\big(\mu_X^\Gamma\big) \ar@{^{(}->}[r]^{\quad(\iota_X)_*} &
 \mathcal{S}O^\Gamma_n\big(\widetilde{X}\big).
 }\end{split}
 \end{equation}
 The two rightmost horizontal arrows are injective by the exactness of the top
 rows of \eqref{diagramStolz} and~\eqref{diagramHR}. The middle lower arrow
 $\big(\mu^\Gamma_{B\Gamma}\big)_*$ becomes injective after tensoring with~$\mathbb{Q}$, due to the assumption that $\Gamma$ satisfies the (rational)
 strong Novikov conjecture, i.e., that $\mu^\Gamma_{B\Gamma}$ is injective
 after tensoring with~$\mathbb{Q}$, and an injective map remains
 injective if we quotient out the images of the same group (here
 $KO_{n+1}(X)\tensor\rationals$) in source and target.

By using Lemma~\ref{pontrjagin-surjective}
 and that
 \[k=\dim\bigg(\mathrm{coker}\bigg(\bigoplus_{j\geq0}H_{n+1-4j}(X;\QQ)\xrightarrow{u^H_{*}} \bigoplus_{j\geq0}H_{n+1-4j}(B\Gamma;\QQ)\bigg)\bigg),\]
 we pick $x_1,\dots,x_k\in \Omega^{\spin}_{n+1}(B\Gamma)$ such that their images
 span a $k$-dimensional subspace in
 \[\coker\bigg(\bigoplus_{j\geq0}u^H_{n+1-4j}\bigg)\cong
 \coker_{n+1}\big({u_*^{ko}}\big)\tensor\mathbb{Q} \subset
 \coker_{n+1}\big({u_*^{KO}}\big)\tensor\mathbb{Q}.\] By
 commutativity of \eqref{eq:coker_diag} and the injectivity of the lower row
 after tensoring with $\rationals$, the images of $x_1,\dots,x_k$ in
 $\Pos^{\spin}_n(X)$ under the composition of the top horizontal arrows then
 span a free abelian subgroup~$W$ of $\Pos^{\spin}_n(X)$ of rank~$k$.

 Let $\Omega^{\spin}_{*}(u)$ be the relative generalized homology group (here
 spin bordism) for the map $u\colon X\to B\Gamma$. In the following commutative diagram
 \[
 \xymatrix{\Omega^{\spin}_{n+1}(u)\tensor\rationals\ar@{->>}[r]\ar@{->>}[d]& \ker_n\big({u^\Omega_*}\big)\otimes\QQ\ar[r]\ar[d]&\Omega^{\spin}_n(X) \otimes\QQ\ar[r]^{{u^\Omega_*}}\ar[d]&\Omega^{\spin}_n(B\Gamma)\otimes\QQ\ar[d]\\
 \bigoplus\limits_{j\ge 0}
 H_{n+1-4j}(u;\rationals)\ar@{->>}[r]&\bigoplus\limits_{j\geq0}\ker_{n-4j} (u_*)\ar[r]&\bigoplus\limits_{j\geq0}H_{n-4j}(X;\QQ)\ar[r]^{u_*}&\bigoplus\limits_{j\geq0}H_{n-4j}(B\Gamma;\QQ)
 }
 \]
 the leftmost vertical arrow is surjective by the relative version of Lemma~\ref{pontrjagin-surjective} and the left horizontal arrows are surjective by
 the exactness of the pair sequence. It follows immediately that the second
 vertical arrow is also surjective.

 Now observe that $\ker_n\big({u^\Omega_*}\big)\otimes\QQ$, by the commutativity of the
 middle square in~\eqref{diagramStolz}, is also contained in the kernel of
 $j_X\colon \Omega_n^{\spin}(X)\otimes\QQ\to
 \mathrm{R}^{\spin}_n(X)\otimes\QQ$. Since
 $\dim\big(\ker_n\big({u^\Omega_*}\big)\otimes\QQ\big)\geq k'$ and by exactness of the Stolz
 sequence, it lifts to a subspace $W'$ of $\mathrm{Pos}^{\spin}_n(X)\otimes\QQ$ of
 dimension greater or equal to $k'$. It is a general fact that $W'$ is
 generated by a free abelian subgroup of~$\Pos^{\spin}_n(X)$ whose rank is
 $\dim(W')$.
 Finally, the exactness of the Stolz sequence implies that $W\tensor\rationals\cap W'=\{0\}$ in $\mathrm{Pos}^{\spin}_n(X)\otimes\QQ$ and the result is proven.
\end{proof}

\begin{Remark}
 We are thankful to a referee for stressing a more
 conceptual approach to the proof of this estimation, using more directly the
 relative generalized homology groups. By \cite[Proposition~2.4]{CSZ} and \cite[Corollary~2.6]{CSZ} we have the following long exact sequence of groups
 \begin{equation}
 \xymatrix{\cdots\ar[r]&\mathrm{Pos}^{\spin}_{n+1}(B\Gamma)\ar[r]&\Omega^{\spin}_{n+1}(u)\ar[r]&\mathrm{Pos}^{\spin}_n(X)\ar[r]^{u_*}&\mathrm{Pos}^{\spin}_n(B\Gamma)\ar[r]
 &\cdots.}\label{eq:rel_Stolz}
 \end{equation}
 Moreover in
 the following commutative diagram
 \[
 \xymatrix{\mathrm{Pos}^{\spin}_{n+1}(B\Gamma)\otimes\QQ\ar[r]\ar[d]&
 ko_{n+1}(B\Gamma)\otimes\QQ\ar[d]\ar@{^{(}->}[r]^{\mu^\Gamma_{B\Gamma}} \ar[d]& KO_{n+1}(B\Gamma)\otimes\QQ\ar[d]\ar[r]^{\mu^\Gamma_{B\Gamma}}&KO_{n+1}(C^*_{\RR}\Gamma)\otimes\QQ\\
 \Omega^{\spin}_{n+1}(u)\otimes\QQ\ar[r]&ko_{n+1}(u)\otimes\QQ \ar@{^{(}->}[r]&KO_{n+1}(u)\otimes\QQ & }
 \]
 the composition of the horizontal arrows in the first row is zero as a
 consequence of the exactness of \eqref{diagramStolz} and
 \eqref{diagramHR}. As $\mu^\Gamma_{B\Gamma}$ is injective by assumption, it follows that the map $\mathrm{Pos}^{\spin}_{n+1}(B\Gamma)\otimes\QQ\to ko_{n+1}(B\Gamma)$ is zero, too.
 Thus, we get the following factorization of the bottom line
 \begin{equation*}
 \Omega^{\spin}_{n+1}(u)\otimes\QQ\to \mathrm{coker}\big(\mathrm{Pos}^{\spin}_{n+1}(B\Gamma)\to\Omega^{\spin}_{n+1}(u)\big)\otimes\QQ\to ko_{n+1}(u)\otimes\QQ.\end{equation*}
 Since by Lemma~\ref{pontrjagin-surjective} we know that
 $\Omega^{\spin}_{n+1}(u)\otimes\QQ\to ko_{n+1}(u)\otimes\QQ$ is surjective, we have that $\mathrm{coker}\big(\mathrm{Pos}^{\spin}_{n+1}(B\Gamma)\to\Omega^{\spin}_{n+1}(u)\big)\otimes\QQ\to ko_{n+1}(u)\otimes\QQ$ is surjective, too.
 Then, since by the relative Stolz exact sequence \eqref{eq:rel_Stolz} $\mathrm{coker}\big(\mathrm{Pos}^{\spin}_{n+1}(B\Gamma)\to\Omega^{\spin}_{n+1}(u)\big)$ is a subspace of $ \mathrm{Pos}^{\spin}_{n}(X)$, we conclude that
 \[\dim\mathrm{Pos}^{\spin}_{n}(X)\otimes\QQ\geq \dim ko_{n+1}(u)\otimes\QQ= \dim \bigoplus_{j\geq0}H_{n+1-4j}(u;\QQ)=k+k'.\]
\end{Remark}

\section{Concordance classes}\label{4}

The basis of most constructions of positive scalar curvature metrics
is the surgery theorem of Gromov and Lawson, see
\cite{GromovLawson} or \cite{EbertFrenck} for full details. It says
that, given a bordism $W$ from~$M_0$ to~$M_1$ such that~$W$ is obtained from
$M_0$ attaching handles only of codimension $\ge 3$, then a metric of
positive scalar curvature on $M_0$ can be extended to a metric of positive
scalar curvature on $W$ with product structure near the boundary. In
particular, one obtains a ``transported'' positive scalar curvature metric
on $M_1$. We call bordisms satisfying the codimension condition
\emph{Gromov--Lawson admissible}.

In the following, we discuss the details how Gromov--Lawson admissible
bordism $W$ can be obtained, focusing on the not quite so obvious question
why finitely many surgery steps suffice. The result appears also, e.g., as
\cite[Theorem~2.2]{Rosenberg} where the finiteness questions are not
discussed or in a much more general setup in \cite[Appendix~2]{HebestreitJoachim}.

\begin{Proposition}\label{prop:change_bord} Let $X$ be a CW-complex.
 Let $F\colon W\to X$ be a spin bordism with reference map between $f_0\colon
 M_0\to X$ and $f_1\colon M_1\to X$. Assume $f_1$ is
 $2$-connected $(\pi_j(f_1)$ is an isomorphism for $j<2$ and an epimorphism
 for $j=2)$ and
 $n:=\dim(M_1)\ge 5$. Then we
 can change $W$ in the interior to $F'\colon W'\to X$ such that $W'$ is a
 Gromov--Lawson admissible bordism from~$M_0$ to~$M_1$.
\end{Proposition}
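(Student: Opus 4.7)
The plan is to view Gromov--Lawson admissibility from the perspective of $M_1$: if $W$ is regarded as a bordism from $M_1$ to $M_0$, a $k$-handle (attached from $M_0$) becomes an $(n+1-k)$-handle (attached to $M_1$), and the codimension $\ge 3$ condition from $M_0$ translates to handle index $\ge 3$ from $M_1$. By standard handle theory in dimension $\ge 6$, the existence of such a decomposition is equivalent to the inclusion $\iota\colon M_1\hookrightarrow W$ being $2$-connected. My aim is therefore to modify $W$ by surgery \emph{in its interior} (keeping $\partial W$ intact, extending the reference map $F$ and the spin structure) until $\iota$ becomes $2$-connected.

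The procedure has three stages, killing $\pi_k(W, M_1)$ successively for $k=0,1,2$.
\emph{Stage $0$:} any component of $W$ not meeting $M_1$ is joined to one that does by a $0$-surgery (a $1$-handle joining two interior points); $F$ and the spin structure extend across this contractible handle, and compactness of $W$ ensures finitely many such steps suffice.
\emph{Stage $1$:} once $\pi_0(W,M_1)=0$, the map $F_*\colon \pi_1(W)\to\pi_1(X)$ is surjective (it receives the isomorphism $f_{1*}$), and because $\pi_1(W)$ and $\pi_1(X)$ are finitely presented (compact manifold, resp.\ finite CW-complex) the kernel $K:=\ker F_*$ is finitely normally generated. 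Each normal generator is represented by an embedded loop in $\mathrm{int}(W)$ whose image in $X$ is nullhomotopic, so $1$-surgery along it kills the class and $F$ extends across the new $D^2\times S^{n-1}$. At the end of this stage $\iota_*\colon\pi_1(M_1)\to\pi_1(W)$ is an isomorphism.
\emph{Stage $2$:} the relative Hurewicz theorem identifies $\pi_2(W,M_1)$ with the $\mathbb{Z}[\pi_1]$-module $\coker(\iota_*\colon\pi_2(M_1)\to\pi_2(W))$. Since $W$ is a finite CW-complex, $\pi_2(W)=H_2(\widetilde{W})$ is finitely generated over $\mathbb{Z}[\pi_1]$ and hence so is the cokernel. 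Each generator may be corrected, using surjectivity of $f_{1*}$ on $\pi_2$, to a class in $\ker F_*$. Such a class is represented by a framed embedded $2$-sphere in $\mathrm{int}(W)$ (general position gives the embedding in $\dim W\ge 5$, and the spin structure trivialises the orientable rank $(n-1)\ge 4$ normal bundle, since a spin vector bundle over $S^2$ is trivial); a $2$-surgery along it kills the class, with $F$ extending by the nullity in $X$ and the spin structure extending across the simply connected handle.

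The \emph{main obstacle} is the finiteness bookkeeping together with the verification that later stages do not undo earlier ones. The former follows from the finite-CW nature of $W$ and $X$, the finite presentability of fundamental groups, and the finite $\pi_1$-generation of $\pi_2$. For the latter, note that in dimension $n+1\ge 6$ a $k$-surgery with $k\le 2$ replaces $S^k\times D^{n-k}$ by $D^{k+1}\times S^{n-k-1}$, whose novel spherical factor has dimension $n-k-1\ge 3$ and thus contributes nothing to $\pi_0,\pi_1,\pi_2$; hence each stage only affects relative homotopy in its intended degree, and at the end $(W',M_1)$ is $2$-connected, yielding the desired Gromov--Lawson admissible bordism $F'\colon W'\to X$.
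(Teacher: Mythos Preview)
Your overall strategy coincides with the paper's: translate Gromov--Lawson admissibility into $2$-connectedness of $M_1\hookrightarrow W$, then kill $\pi_0,\pi_1,\pi_2$ of the pair by interior surgeries. Stages~0 and~1 are fine, apart from one slip: the proposition only assumes $X$ is a CW-complex, not a finite one, so you cannot conclude that $\pi_1(X)$ is finitely presented from $X$. The correct reason (used in the paper) is that $f_1$ is $2$-connected, hence $\pi_1(X)\cong\pi_1(M_1)$, and $M_1$ is a closed manifold.

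The real gap is in Stage~2. You assert that ``$\pi_2(W)=H_2(\widetilde{W})$ is finitely generated over $\mathbb{Z}[\pi_1]$'' because $W$ is a finite CW-complex. This is false in general: the cellular chain complex of $\widetilde{W}$ consists of finitely generated free $\mathbb{Z}[\pi_1]$-modules, but $\mathbb{Z}[\pi_1]$ need not be Noetherian, so the kernel $Z_2\subset C_2$ (and hence $H_2$) need not be finitely generated. Concretely, if $G$ is finitely presented but not of type $FP_3$ and $K$ is a finite presentation $2$-complex for $G$, then a regular neighbourhood $W$ of $K\subset\mathbb{R}^{n+1}$ is a compact manifold with $\pi_2(W)\cong\pi_2(K)$ \emph{not} finitely generated over $\mathbb{Z}[G]$; and here $\partial W\hookrightarrow W$ is even a $\pi_1$-isomorphism, so the extra structure in your setting does not rescue the claim. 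What is true, and what you actually need, is that the \emph{cokernel} of $\pi_2(M_1)\to\pi_2(W)$ is finitely $\mathbb{Z}[\pi_1]$-generated. The paper obtains this by first applying handle cancellation to the pair $(W,M_1)$ (legitimate once $M_1\hookrightarrow W$ is $1$-connected) to remove relative $0$- and $1$-handles; then the relative $2$-skeleton is homotopy equivalent to $M_1\vee\bigvee_{j\in J}S^2$ with $J$ finite, so the cokernel is visibly generated by $|J|$ spheres. Equivalently, with no relative cells below degree~$2$ one has $H_2(\widetilde{W},\widetilde{M_1})=\coker(\partial_3\colon C_3\to C_2)$, a \emph{quotient} of a finitely generated module. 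This is also a special case of a result of Wall on finiteness conditions. Replacing your sentence by either of these arguments fixes the proof.
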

\begin{proof}
 By standard results from surgery theory (compare~\cite{Wall}), the desired bordism $W'$ is
 Gromov--Lawson admissible if the inclusion $M_1\into W'$ is $2$-connected. We
 perform surgeries in the interior of $W$ to achieve this.

 \textbf{Isomorphism on $\boldsymbol{\pi_0}$.} We treat each component $A$ of $X$ (or
 equivalently of $M_1$) at a time. We have then to modify $W_A:=F^{-1}(A)$
 so that it becomes connected. This is achieved by (interior) connected sum
 of the finitely many components of $W'$. Because also $X$ is
 path-connected, the map $F\colon W\to X$ can be extended over the
 connected sum of its components.

 \textbf{Isomorphism on $\boldsymbol{\pi_1}$.} The composition $\pi_1(M_1)\to
 \pi_1(W)\to \pi_1(X)$ is an isomorphism, therefore the map $\pi_1(W)\to
 \pi_1(X)$ is surjective. We want to modify $W$ with further
 surgeries which eliminate
 its kernel, then $\pi_1(W')\to\pi_1(X)$ and consequently also $\pi_1(M_1)\to
 \pi_1(W')$ is an isomorphism. As $\pi_1(M_1)\cong \pi_1(X)$ is
 finitely presented (as fundamental group of a smooth compact manifold)
 and $\pi_1(W)$ is finitely generated, this
 kernel is finitely generated as a normal subgroup, see Lemma~\ref{ker-fg} below. So we have to do a finite
 number of surgeries along embedded circles (in the interior of~$W$). Because
 $W$ is oriented, these automatically have trivial normal bundle, so surgery
 is possible. It is also well-known that, possibly after changing the trivialization by the non-trivial element of
 $\pi_1(SO(n))\cong \integers/2$, one can equip the result of the surgeries
 with a spin structure. The fact
 that we do surgery along elements which lie in the kernel of
 $\pi_1(F)$ means precisely that $F$ can be extended over the disks and thus
 over the new bordism, which we continue to denote $W$ by small abuse of
 notation.

 \textbf{Epimorphism on $\boldsymbol{\pi_2}$.} We finally have to perform surgeries so
 that $\iota_*\colon \pi_2(M_1)\to \pi_2(W)$ becomes surjective, where
 $\iota\colon M_1\into W$ is the inclusion. We follow the proof of \cite[Lemma~5.6]{Stolz} adapted to our situation.
 Since $M_1$ and $W$ are compact manifolds, the relative 2-skeleton
 $(W,M_1)^{(2)}$ of $W$ is obtained by attaching a finite number of 2-cells
 to $M_{1}$. To see this one starts with a handlebody decomposition of $W$
 relative to $M_1$ and then uses handle cancellation (the standard results
 from surgery theory, \cite{Wall}, alluded to above) to get rid of $0$-handles and $1$-handles
 using the fact that $M_1\into W$ now is $1$-connected. The $2$-skeleton
 $(W,M_1)^{(2)}$ of
 the CW-decomposition of $W$ relative $M_1$ arising from this new handlebody
 is then homotopy equivalent to $M_{1}\vee\big(\bigvee_{j\in J}
 S^2\big)$.\footnote{Note that this implies a special case of the following
 result of Wall \cite[proof of Lemma~1.1]{Wall_finite}: if $X$, $Y$ are finite connected
 CW-complexes and $f\colon X\to Y$ induces an isomorphism on $\pi_1$,
 then~$\pi_2(f)$, here isomorphic to $\coker(\pi_2(f)\colon \pi_2(X)\to
 \pi_2(Y))$ is finitely generated as a module over the (common)~$\pi_1$.}

 Now, the cokernel of $\iota_*\colon \pi_2(M_1)\to \pi_2(W)$
 is finitely generated by these spheres $x_j$, for $j\in J$. Since $(f_1)_*\colon \pi_2(M_1)\to \pi_2(X)$ is surjective, there exist
 elements $\{y_j\in \pi_2(M_1)\}_{j\in J}$ such that $(f_1)_*(y_j)=F_*(
 x_j)$. It follows that the alternative generators of the cokernel given by $\iota_*\big(y_j^{-1}\big) x_j$ satisfy
 \[F_*\big(\iota_*\big(y_j^{-1}\big) x_j\big)=(\iota\circ
 F)_*\big(y_j^{-1}\big)F_*(x_j)=(f_1)_*\big(y^{-1}_j\big)F_*(x_j)=0\qquad\forall\, j.\]
 These generators we can
 assume embedded because $n\ge 5$. Since $W$ is spin, the normal
 bundle of these embedded spheres is automatically trivial and surgery
 along them is possible.

Because of this, we can extend $F$ over the surgeries along
 the alternative generators $\iota_*\big(y_j^{-1}\big) x_j$ and we obtain
 the desired cobordism ${F'}\colon W'\to X$ such that the
 inclusion of $M_1$ into $W'$ is a~$2$-equivalence.

 We conclude by explaining why this now is a Gromov--Lawson admissible
 bordism. Start with an arbitrary handle decomposition of~$W$ relative to~$M_1$.
 Now we are in the situation to apply the handle cancellation method~\cite{Wall} again: because the map is $2$-connected, we find an alternative
 handle decomposition without $0$-, $1$-, or $2$-handles. Turning this
 upside-down this handle decomposition can be interpreted as a handle
 decomposition of $W$ relative to $M_0$. In this interpretation, what was
 previously the dimension of the handle now becomes the codimension. As the
 result, we obtain $W$ from $M_0$ attaching handles only of codimension $\ge
 3$, as desired.
\end{proof}

\begin{Lemma}\label{ker-fg}
 Let $\alpha\colon \Gamma'\to\Gamma$ be a surjective group homomorphism
 between finitely generated groups. Assume in addition that $\Gamma$ is
 finitely presented. Then the kernel of $\alpha$ is finitely generated as a
 \emph{normal} subgroup of $\Gamma'$.
\end{Lemma}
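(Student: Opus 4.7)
The plan is to reduce the statement to the classical fact that finite presentability of a group is independent of the choice of finite generating set. Fix a finite generating set $g_1,\dots,g_n$ of $\Gamma'$ and let $p\colon F\twoheadrightarrow\Gamma'$ be the corresponding surjection from the free group $F=F(x_1,\dots,x_n)$, with $K:=\ker p$. The composition $\alpha\circ p\colon F\to\Gamma$ is still surjective, and its kernel $N$ contains $K$ with $N/K\cong\ker\alpha$ as normal subgroups of $\Gamma'\cong F/K$. Hence it is enough to show that $N$ is finitely generated as a \emph{normal} subgroup of $F$: any finite set of normal generators of $N$ in $F$ projects under $p$ to a finite set of normal generators of $\ker\alpha$ in $\Gamma'$.

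To obtain the normal finite generation of $N$, I would invoke the classical theorem of B.\,H.~Neumann stating that if a group $\Gamma$ is finitely presented with respect to some finite generating set, then it is finitely presented with respect to every finite generating set. Applied to the surjection $\alpha\circ p\colon F\twoheadrightarrow \Gamma$, whose image generators $\alpha(g_1),\dots,\alpha(g_n)$ form a finite generating set of the finitely presented group $\Gamma$, this gives exactly that $N$ is the normal closure in $F$ of a finite subset.

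The one nontrivial step is this independence statement, which I expect to be the main (though standard) obstacle and which the paper probably cites. The argument compares an \emph{a priori} finite presentation $\langle y_1,\dots,y_m\mid s_1,\dots,s_\ell\rangle$ of $\Gamma$ to the chosen generating set $\alpha(g_1),\dots,\alpha(g_n)$: each $y_i$ can be written as a word $w_i$ in the $\alpha(g_j)$ and each $\alpha(g_j)$ as a word $u_j$ in the $y_i$, and the relations obtained by substituting the $w_i$ into the $s_k$ together with the finitely many relations $x_j=u_j(w_1,\dots,w_m)$ suffice to normally generate the kernel of the new presentation. This finishes the argument; no further input from the geometric setting of the paper is needed.
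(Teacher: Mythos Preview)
Your argument is correct. The underlying computation is the same Tietze-style trick as in the paper, but you organize it differently: you first lift everything to the free group $F$ on a finite generating set of $\Gamma'$ and then invoke Neumann's theorem (finite presentability is independent of the finite generating set) as a black box, whereas the paper works directly in $\Gamma'$ and writes down explicit normal generators of $\ker\alpha$. Concretely, the paper lifts the generators $x_i$ of $\Gamma$ to elements $a_i\in\Gamma'$, then takes the relators $w_j(a_1,\dots,a_h)$ together with the ``change of generators'' elements $y_l^{-1}w_l'(a_1,\dots,a_h)$ (where $y_1,\dots,y_n$ generate $\Gamma'$ and $w_l'$ expresses $\alpha(y_l)$ in the $x_i$) and asserts these normally generate $\ker\alpha$. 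Your sketch of Neumann's theorem in the last paragraph is exactly this computation carried out one level up in $F$; so the two proofs coincide once unpacked. The paper does not cite the result but proves it in this self-contained form.
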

\begin{proof}
 Let us fix a finite presentation $ \Gamma=\langle x_1,\dots, x_h ; r_1, \dots, r_k\rangle $, where the relations $r_j$ are given by fixed words $w_j\big(x_1^{\pm1},\dots, x_h^{\pm1}\big)$. Let us fix also a finite set of generators $\{ y_1,\dots, y_n\}$ for~$\Gamma'$.
 Pick $a_1,\dots,a_h\in\Gamma'$ such that $\alpha(a_j)=x_j$ for all $j$ and set
 $w_l'\big(x_1^{\pm1},\dots, x_h^{\pm1}\big):=\alpha(y_l)$. Then it follows
 that \begin{gather*}\big\{w_1\big(a_1^{\pm1},\dots, a_h^{\pm1}\big),\dots, w_k\big(x_1^{\pm1},\dots,
 x_h^{\pm1}\big),y^{-1}_1w_1'\big(x_1^{\pm1},\dots, x_h^{\pm1}\big), \dots,
 y^{-1}_nw_n'\big(x_1^{\pm1},\dots, x_h^{\pm1}\big)\!\big\}\end{gather*}
 is a finite set of generators
 as a normal subgroup for $\ker \alpha$.
\end{proof}

Now we are ready for the proof of the main result of this section.

\begin{proof}[Proof of Theorem \ref{rank>k}]
 Let us consider again the situation of Theorem \ref{lower-bound}, where we have
 classes $x_1=\big[M_1\xrightarrow{f_1}X,h_1\big]$,
 $\dots$, $x_k=\big[M_k\xrightarrow{f_k}X,h_k\big]$ in $\Pos^{\spin}_{n}(X)$ which span a subgroup of rank $k$,
 but are trivial when mapped to $\Omega^{\spin}_n(X)$ (and a fortiori to $\Omega^{\spin}_n(B\Gamma)$), in particular they are null-bordant.
 Let us pick such null-bordisms $F^i\colon Y_i\to X$, so that $M_i$ is the boundary of $Y_i$ and $f_i$ is the restriction of $F^i$ to the boundary.

 For $i\in\{1,\dots,k\}$, the disjoint union of $M$ and $M_i$ is spin bordant to $M$, with bordism $ G_i\colon W_i\to X$
 given by the disjoint union of $f\times id\colon M\times [0,1]\to X$ and
 $F_i\colon Y_i\to X$. By Proposition~\ref{prop:change_bord} we can modify
 these bordisms in the interior and then assume that
 $W_i$ is Gromov--Lawson admissible.

Now we can use the Gromov--Lawson surgery theorem
 to ``push'' the given metrics $g\disjointunion h_i$ of positive scalar curvature
 from $M\disjointunion M_i$ through the new
 bordism to positive scalar curvature metrics $g_i$ on $M$. We obtain new
 representatives $\big[M\xrightarrow{f} X, g_i\big]$ of $x_i$. As the $x_i$ span an
 affine lattice of rank $k$ in $\Pos^{\spin}(X)$, this finishes the
 proof.
\end{proof}

Let us spell out the special case $X=M$ of Theorem \ref{rank>k}:
\begin{Corollary}\label{corol:metrics}
 Let $(M,g)$ be an $n$-dimensional connected
 spin manifold
 of positive scalar curvature with $\pi_1(M)=\Gamma$, $n\ge 5$. Let $u\colon M\to
 B\Gamma$ be an isomorphism on fundamental groups. Set
 \begin{gather*}
 k:=\sum_{ \substack{0\le j\le n+1\\ j-n\equiv 1\pmod 4}} \dim\left(\coker(u_*\colon
 H_j(M;\rationals)\to H_j(B\Gamma;\rationals))\right).
 \end{gather*}
 Then $M$ admits metrics $g_1,\dots,g_k$ of positive scalar curvature such
 that the elements $(M,g)$, $(M,g_1), \dots, (M,g_k)$ span a $k$-dimensional affine
 subspace of $\Pos^{\spin}_n(M)\tensor\rationals$. In particular, these
 metrics form a $k$-dimensional lattice of non-concordant metrics of
 positive scalar curvature on~$M$.
\end{Corollary}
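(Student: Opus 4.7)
The plan is to apply Theorem~\ref{rank>k} directly with $X := M$. The identity map $\mathrm{id}_M\colon M \to M$ together with the given psc metric $g$ provides a cycle $(\mathrm{id}_M, g)$ in $\Pos^{\spin}_n(M)$, and $\mathrm{id}_M$ is trivially 2-connected (it is a diffeomorphism), so the cycle hypothesis of Theorem~\ref{rank>k} is satisfied. The map $u\colon M \to B\Gamma$ inducing an isomorphism on $\pi_1$ is, up to homotopy, precisely the classifying map of the universal cover that enters Theorem~\ref{lower-bound}, so the setups line up and the strong Novikov hypothesis inherited from that theorem is in force.

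Next I would identify the integer $k$ of Theorem~\ref{lower-bound} with the sum in the corollary. Since the cokernel of a direct sum of homomorphisms is the direct sum of the cokernels, one has
\[
k = \sum_{j\ge 0} \dim\bigl(\coker\bigl(u_*\colon H_{n+1-4j}(M;\QQ) \to H_{n+1-4j}(B\Gamma;\QQ)\bigr)\bigr).
\]
Reindexing by the homological degree $i := n+1-4j$, the possibly nonzero summands correspond to integers $i$ satisfying $0 \le i \le n+1$ and $i - n \equiv 1 \pmod 4$, reproducing exactly the formula for $k$ in the corollary.

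Having matched both the hypotheses and the value of $k$, Theorem~\ref{rank>k} immediately delivers psc metrics $g_1, \dots, g_k$ on $M$ such that $(M,g), (M,g_1), \dots, (M,g_k)$, all with reference map the identity, span an affine lattice of rank $k$ in $\Pos^{\spin}_n(M)$, and hence a $k$-dimensional affine subspace of $\Pos^{\spin}_n(M) \otimes \QQ$. For the non-concordance statement I would use the standard fact that a concordance of psc metrics on $M$ is in particular a psc bordism over $M\times [0,1]$ with reference map the projection to $M$; thus distinct classes in $\Pos^{\spin}_n(M)$ yield pairwise non-concordant metrics, and by part~(3) of Theorem~\ref{rank>k} the affine lattice descends to a $k$-dimensional affine lattice of concordance classes in $P^+(M)$ with respect to the $\mathrm{R}^{\spin}_{n+1}(\Gamma)$-torsor structure of Stolz.

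There is no substantial obstacle here: the corollary is essentially a re-packaging of the case $X=M$ of Theorem~\ref{rank>k}. The only points meriting attention are the bookkeeping that rewrites the direct-sum cokernel as a sum indexed by degrees $j \equiv n+1 \pmod 4$, and the invocation of part~(2) of Theorem~\ref{rank>k}, which guarantees that the representatives produced for the classes $x_i$ actually have $\mathrm{id}_M$ as their reference map rather than some other 2-connected replacement obtained during the surgery step in the proof of Theorem~\ref{rank>k}.
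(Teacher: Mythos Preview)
Your proposal is correct and matches the paper's approach exactly: the paper introduces this corollary with the sentence ``Let us spell out the special case $X=M$ of Theorem~\ref{rank>k}'' and gives no further proof, so specializing to $X=M$ with the cycle $(\mathrm{id}_M,g)$ and reindexing the cokernel sum is precisely what is intended. Your remarks on the Novikov hypothesis being inherited and on invoking parts~(2) and~(3) of Theorem~\ref{rank>k} for the reference map and the concordance statement are apt elaborations of what the paper leaves implicit.
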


\begin{Example}
 Let $(M,g)$ be a connected $n$-dimensional spin manifold of
 positive scalar curvature such that
 $\dim(H_{n+1}(B\pi_1(M);\rationals))=k$. Then
 the cokernel of the map induced by the inclusion in homology of degree
 $n+1$ is $H_{n+1}(B\pi_1(M))$, as $H_{n+1}(M)=0$ for degree
 reasons. Therefore there exist $k$ metrics $g_1,\dots,g_k$ of
 positive scalar curvature on $M$ which span
 together with $g$ an affine space of rank $k$ in
 $\Pos^{\spin}_n(M)$ and, in particular, give rise to a lattice of
 rank $k$ of concordance classes of positive scalar curvature
 metrics on $M$.

For example, if $\pi_1(M)\iso\integers^N$ then we have
 $\dim\big(H_{n+1}\big(\integers^N;\rationals\big)\big)=\binom{N}{n+1}$.
\end{Example}

\begin{Example}\label{ex:given_pi1}
 Assume that $n\ge 5$ and $\Gamma=\langle x_1\dots,x_k\,|\,
 r_1,\dots,r_h\rangle$ is finitely presented. Then there
 exists a closed spin manifold~$M$ of dimension $n$ with fundamental group
 $\Gamma$ which admits a metric~$g$ of positive scalar curvature.

 Indeed, take the wedge of $k$ circles and, for each relation
 $r_i$, attach a two cell. Denote by $X$ this 2-dimensional CW-complex. Finally
 embed $X$ into $\RR^{n+1}$ and consider a
 tubular neighbour\-hood~$\mathcal{N}$ of~$X$. Then
 $M:=\partial\mathcal{N}$ is an $n$ dimensional spin manifold
 with fundamental group $\Gamma$. Observe that $\mathcal{N}$ is
 a spin $B\Gamma$ null-bordism for $M$ or, after cutting out a disk, a
 $B\Gamma$ bordism from~$S^n$ to~$M$. By Proposition~\ref{prop:change_bord} we can
 assume that this bordism is Gromov--Lawson admissible. Then~$M$ admits a metric
 $g$ of positive scalar curvature by the Gromov--Lawson
 surgery theorem.

 In addition, observe that for the manifold we constructed we have a
 factorization $u\colon M\to \mathcal{N}\to B\Gamma$ and $\mathcal{N}$ is
 homotopy equivalent to a $2$-dimensional CW-complex. Therefore
 \begin{equation*}
 \im(u_*\colon H_j(M)\to H_j(B\Gamma))=\{0\}\qquad\forall\, j\ge 3.
 \end{equation*}

 By Corollary~\ref{corol:metrics}, with $k= \sum_{3\le j\le n+1,\, j\equiv
 n+1\pmod 4} \dim(H_j(B\Gamma;\QQ))$ we find metrics $g_1,\dots,g_k$ of
 positive scalar curvature on $M$ such that the $(M,g_i)$ together with $(M,g)$
 span a $k$-dimensional affine subspace of
 $\Pos^{\spin}_n(M)\tensor\rationals$. Note that this includes many examples
 where $k=+\infty$, whenever the rational homology of $\Gamma$ is not finitely
 generated in the appropriate degrees.
\end{Example}

\begin{Remark}In Example \ref{ex:given_pi1} we focused on $\Pos^{\spin}_n(M)\tensor\rationals$. Nevertheless, as one of the referees suggested, if we consider $P^+(M)$ we can obtain the better estimate of its rank as follows:
 \begin{equation*}
 \dim P^+(M)\otimes\QQ= \dim \mathrm{R}^{\spin}_n(M)\tensor\rationals\geq
 \dim \big(\im(\Omega^{\spin}_{n+1}(B\Gamma)\otimes\QQ\to KO_{n+1}(B\Gamma)\otimes\QQ)\big),
 \end{equation*}
 where the last inequality is obtained by combining the mapping
 from the Stolz exact sequence to the Higson--Roe exact
 sequence and the assumption that~$\Gamma$ verifies the
 (rational) strong Novikov conjecture. Then we deduce from to Lemma~\ref{pontrjagin-surjective} that
 \begin{equation}\label{eq:bigger_P_bound}
 \dim P^+(M)\otimes\QQ\geq \sum_{j\geq0}\dim H_{n+1-4j}(B\Gamma;\QQ).\end{equation}
 The difference between this lower bound and the one obtained
 in Theorem~\ref{rank>k} is due to the fact that there
 can be metrics which are not concordant (hence different in
 $P^+(M)$) but bordant (hence equal in $\Pos^{\spin}_n(M)$).
 Indeed, to understand where the larger number of linearly
 independent metrics predicted by~\eqref{eq:bigger_P_bound}
 compared to Theorem \ref{rank>k} come from, let us
 consider \[\im\bigg(\bigoplus_{j\geq0}H_{n+1-4j}(M;\QQ)\xrightarrow{u_*}
 \bigoplus_{j\geq0}H_{n+1-4j}(B\Gamma;\QQ)\bigg).\] Let $V'\subset
 \bigoplus_{j\geq0}H_{n+1-4j}(M;\QQ)$ be a subspace of maximal
 dimension on which $u_*$ is injective. By Lemma~\ref{pontrjagin-surjective} we can lift it to a subspace $V$ of
 $\Omega_n^{\spin}(M)\tensor\rationals$ and, because of the
 assumption about the (rational)
strong Novikov conjecture, $V$ injects into
 $\mathrm{R}^{\spin}_n(M)\otimes\QQ$ and provides an additional
 subspace of $P^+(M)\otimes\QQ$. It is immediate to see that, by
 the exactness of the Stolz sequence, $V$ is mapped to $0$ in $\mathrm{Pos}^{\spin}_n(M)\otimes\QQ$
 and therefore all the metrics given by~$V$ in $R^+(M)$ are
 null-bordant.
 We are going to see examples of this kind of metrics in Remark~\ref{carr}.
\end{Remark}

\begin{Remark}
 In special situations, the different metrics constructed in Theorem~\ref{rank>k}, Corollary~\ref{corol:metrics} and the examples remain
 different also in the moduli space of Riemannian metrics of positive scalar
 curvature on $M$, the quotient by the action of the diffeomorphisms
 group. This is worked out in detail in~\cite{PiazzaSchickZenobi_pairing}. As indicated in the introduction, this is based
 on the use of higher numeric rho invariants, whose behavior under the action
 of the diffeomorphism group can be controlled.
\end{Remark}

\begin{Remark}Consider the map $i$ from \eqref{i-map}. If we compose it
 with the map ${\Ind}^{\Gamma}$ in $\eqref{HRses}$, it is easy to see
 that we obtain the map \[\mathrm{Ind\, diff}^\Gamma\colon \ P^+(M)\times
 P^+(M)\to KO_{n+1}(C^*\Gamma)\] used in \cite[Section~5.3]{EbertRW}. More
 precisely, in \cite{EbertRW} the map is defined on the space of isotopy
 classes of metrics with positive scalar curvature, but it descends to
 $P^+(M)$, using that the different definitions of $\mathrm{Ind\,diff}^\Gamma$ all
 coincide, proved in detail in~\cite{Ebert_inddiff} and for non-trivial~$\Gamma$ in the M\"unster dissertation of Buggisch~\cite{Buggisch}.

 It is straightforward to see that, rationally, the affine subspace generated by the lattice of~$P^+(M)$ in Theorem~\ref{rank>k}, $(3)$ is mapped surjectively onto
 the image of the rational assembly map $ko_{n+1}(B\Gamma)\otimes\QQ\to KO_{n+1}(C^*\Gamma)\otimes\QQ$.
\end{Remark}

\begin{Remark}\label{carr}
 As a predecessor construction of concordance classes which does not make use of non-trivial torsion, let us recall the construction of Carr, see~\cite{Carr}.

 First, consider the sphere~$S^{4n-1}$. Carr takes a 2-connected $4n$ dimensional spin manifold~$B$ with $\hat{A}(B)=1$ and removes two disks to obtain a bordism $W$ from $S^{4n-1}$ to $S^{4n-1}$. Positive scalar curvature surgery produces a metric of positive scalar curvature on $W$ starting with the canonical metric on $S^{4n-1}$ and ending with a non-concordant new metric of positive scalar curvature on $S^{4n-1}$.

 However, these metrics are equal in $\Pos^{\spin}_{4n-1}\big(S^{4n-1}\big)$. To see this, we have just to construct the reference map $F\colon W\to S^{4n-1}$ which restricts to the identity on the boundary components.
 For this, choose a path which is a clean embedding of the closed interval into~$W$, joining two points in the two boundary spheres. Choose then a tubular neighbourhood of this one dimensional submanifold of~$W$, which is necessarily trivial. Now a trivialization of the tubular neighbourhood defines a collapse map from $W$ to $S^{4n-1}$, whose restriction to the boundary components is homotopic to the identity. Putting these homotopies on collar neighbourhoods of the boundary components, we obtain the desired map~$F$.

 More generally, given an arbitrary closed spin manifold $M$ of dimension $4n-1$ with positive scalar curvature metric $g$, Carr makes a connected sum of $M\times [0,1]$ with $W$ along a path parallel to the previously chosen one, to obtain a psc bordism $V$ from $(M,g)$ to $(M,g')$. These two metrics have non-zero index difference and therefore they are not concordant.
 Nevertheless, they are equal in $\Pos^{\spin}_{4n-1}(M)$. We obtain the desired reference map from $V $ to $M$ by connected sum of the previous map with the projection from $M\times [0,1]$ to $M$.
\end{Remark}

\subsection*{Acknowledgements}
The authors thank the German Science Foundation and its priority program ``Geometry at Infinity'' for partial support. We thank the
referees for a number of helpful suggestions improving the presentation and helping to avoid inaccuracies.

\pdfbookmark[1]{References}{ref}
\LastPageEnding

\end{document}